\newtheorem{teo}{Theorem}[section] 
\newtheorem{lem}[teo]{Lemma}
\newtheorem{cor}[teo]{Corollary}
\newtheorem{defi}[teo]{Definition}
\newtheorem{prop}[teo]{Proposition}
\title{Fundamental divisors on Fano varieties\\ of index $n-3$} 
\date{\today}
\author{Enrica Floris}
\address{Enrica Floris\\
IRMA, Universit\'e de Strasbourg et CNRS\\
7 rue Ren\'e-Descartes\\
67084 Strasbourg Cedex\\
France}
\email{floris@math.unistra.fr}
\begin{document}

\begin{abstract}

Let $X$ be a Fano manifold of dimension $n$ and index $n-3$.
Kawamata proved the non vanishing of
the global sections of the fundamental divisor in the case $n=4$.
Moreover he proved that if $Y$ is a general element of
the fundamental system then $Y$ has at most canonical singularities.
We prove a generalization of this result in arbitrary dimension.

\end{abstract}

\maketitle

\section{Introduction}
A Fano variety is an $n$-dimensional $\mathbb{Q}$-Gorenstein projective variety $X$
with ample anticanonical divisor $-K_X$.
The \textit{index} of a Fano variety $X$ is
$$i(X)=\sup\{t\in\mathbb{Q}\;|-K_X\sim_{\mathbb{Q}} tH,\: H\:\:\rm{ample,}\:\:\rm{Cartier} \}.$$
If $X$ has at most log terminal singularities,
the Picard group $Pic(X)$ is torsion free.
Therefore the Cartier divisor $H$ such that $-K_X\sim_{\mathbb{Q}} i(X)H$ is determined up to linear equivalence.
It is called the \textit{fundamental divisor} of $X$.

It is well known that $i(X)\leq n+1$. 
By the Kobayashi-Ochiai criterion, if $i(X)\geq n$ then $X$ is isomorphic to a hyperquadric or to a projective space.
Smooth Fano varieties of index $n-1$ have been classified by Fujita 
and smooth Fano varieties of index $n-2$ by Mukai.
An important tool in Mukai's classification is Mella's generalization ~\cite{Mella} of Shokurov's theorem ~\cite{Sho} 
on the general anticanonical divisor of smooth Fano threefolds: 
a general fundamental divisor $Y \in |H|$ is smooth.
\\
In this work we consider the case of Fano varieties of index $n-3$.
In ~\cite{Kaw} Kawamata proved, for a Fano variety $X$ of dimension 4, 
the non vanishing of the global sections of the anticanonical divisor.
Moreover he proved that if $Y$ is a general element of the anticanonical system
then $Y$ has at most canonical singularities.
In this work we propose the following generalization of Kawamata's result:

\begin{teo}\label{maintheorem1}
Let $X$ be a Fano variety of dimension $n\geq 4$ with at most Gorenstein canonical singularities
and index $n-3$, with $H$ fundamental divisor.

Suppose that $h^0(X,H)\neq 0$ and let $Y\in|H|$ be a general element.
Then $Y$ has at most canonical singularities.
\end{teo}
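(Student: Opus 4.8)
The plan is to localize the statement on the base locus of $|H|$ and then to control the singularities of the pair $(X,Y)$ by means of vanishing theorems. Since $X$ is Gorenstein canonical and $H$ is Cartier, for a reduced irreducible general member $Y\in|H|$ adjunction gives $K_Y=(K_X+Y)|_Y$, and a version of inversion of adjunction in the canonical setting reduces the claim that $Y$ has canonical singularities to the claim that the pair $(X,Y)$ is canonical along $Y$. Away from $\mathrm{Sing}(X)\cup\mathrm{Bs}|H|$ the general $Y$ is smooth by Bertini, and a standard Bertini-type argument for canonical singularities disposes of the points of $\mathrm{Sing}(X)$ lying off the base locus; hence everything concentrates on $Z:=\mathrm{Bs}|H|$. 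A preliminary point is to rule out a divisorial fixed component of $|H|$, so that $Y$ really is reduced and irreducible; here one uses that $H$ is the fundamental divisor of a Fano variety together with the non-vanishing hypothesis $h^0(X,H)\neq0$.

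Writing the condition out on a log resolution $\mu\colon X'\to X$ of $(X,|H|)$, the pair $(X,Y)$ is canonical if and only if $\mathrm{mult}_E|H|\le a(E;X)$ for every prime divisor $E$ over $X$ with center $c_X(E)\subseteq Z$, where $\mathrm{mult}_E|H|$ is the multiplicity of the base locus of $|H|$ along $E$ and the discrepancies $a(E;X)$ are nonnegative because $X$ is canonical. Thus the theorem becomes an estimate bounding the multiplicities of the base locus against the discrepancies of $X$. I would argue by contradiction: if this estimate fails, then, after a standard perturbation, the pair $(X,Y)$ has a nonempty non-klt locus contained in $Z$, and I would pass to a minimal center $W$ of this locus.

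The core of the argument is then to exploit the positivity $-K_X\sim_{\mathbb{Q}}(n-3)H$. Using Kawamata--Viehweg vanishing (equivalently Nadel vanishing for an adjoint line bundle twisted by the multiplier ideal of $(X,Y)$), together with the connectedness lemma, I would show that suitable sections restrict surjectively onto $W$ and that, by Kawamata's subadjunction, $W$ carries a polarization of Fano type induced by $H|_W$. Feeding this back into $|H|$ produces sections incompatible with $W\subseteq\mathrm{Bs}|H|$, or contradicting the generality of $Y$. Organizing the reduction so that $\dim W$ decreases at each step, the induction is anchored by the case $n=4$, which is exactly Kawamata's theorem \cite{Kaw}.

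The step I expect to be the main obstacle is the analysis when the minimal non-klt center $W$ has positive dimension. Pinning down $W$ via the connectedness lemma, controlling the restricted linear system and the singularities of $W$, and performing the tie-breaking (perturbing $Y$ by further general members of $|H|$ or by small ample divisors) so as to isolate $W$ while keeping the discrepancy bookkeeping sharp is delicate. It is precisely here that the hypotheses that $X$ is Gorenstein and canonical must be used in full, in order to make the numerical estimates tight enough to force the contradiction.
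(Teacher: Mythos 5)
Your skeleton --- argue by contradiction, locate a minimal non-klt centre $W$ of $(X,cY)$ inside $\mathrm{Bs}|H|$ (Ambro's lemma), tie-break to make $W$ exceptional, use Nadel vanishing to restrict sections of $H$ onto the non-klt locus, and apply Kawamata subadjunction to endow $W$ with a log Fano polarization --- is exactly the skeleton of the paper's proof. But the engine that turns this into a contradiction is missing, and the substitute you propose would not work. What the argument needs on $W$ is an \emph{effective nonvanishing} statement $h^0(W,\mathcal{O}_W(H))\neq 0$, which contradicts $h^0(W,\mathcal{O}_W(H))=0$ obtained from the Nadel-vanishing surjection and $W\subseteq\mathrm{Bs}|H|$. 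You propose to get this by an induction on the theorem itself, ``anchored at $n=4$ by Kawamata's theorem.'' That induction does not close: subadjunction gives $K_W+B_W\sim_{\mathbb{Q}}-(n-3-c-\eta)H|_W$ with $(W,B_W)$ only a \emph{klt log pair} --- not a Gorenstein canonical Fano variety --- so the theorem's hypotheses never apply to $W$; moreover the statement being proved (regularity of a general member) is not the statement needed (nonvanishing on $W$). What the paper quotes from \cite{Kaw} is precisely his effective nonvanishing results: \cite[Prop.\ 4.1]{Kaw} when $\dim W\le 2$, and \cite[Theorem 5.1]{Kaw} when $(W,B_W)$ is log Fano of index $>\dim W-3$. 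There is then no descending induction on $\dim W$ at all: since $c\le 1$ and $\eta$ is arbitrarily small, the index $n-3-c-\eta$ exceeds $\dim W-3$ unless $\dim W\ge n-c-\eta$, which forces $\dim W=n-1$ in a single step.

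That codimension-one case is the second genuine gap. Your preliminary claim that one can ``rule out a divisorial fixed component of $|H|$'' from $H$ being fundamental and $h^0(X,H)\neq 0$ is unsubstantiated --- nothing in those hypotheses excludes a fixed divisor a priori, and the index computation above does not exclude $\dim W=n-1$ either, exactly because $c\le 1$ is all one knows at that point. The paper closes this case with a separate elementary lemma (Lemma \ref{c1/2}): if every minimal centre of $(X,cY)$ has codimension one, then $c\le 1/2$ (a non-reduced $Y$ forces $c\le 1/2$ directly; a reduced $Y$ forces the existence of a centre of codimension $\ge 2$); then $\dim W\ge n-c-\eta\ge n-1/2$, which is absurd. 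Without this lemma or a replacement, your argument has a hole precisely where you flagged ``the main obstacle.'' A calibration remark: the paper reduces via inversion of adjunction \cite[Thm.\ 7.5]{K} to showing $(X,Y)$ is \emph{plt}, which in the Gorenstein setting is equivalent to your ``canonical'' formulation since discrepancies are integers; working with the log canonical threshold $c$ of $(X,Y)$, rather than with your multiplicity inequalities $\mathrm{mult}_E|H|\le a(E;X)$, is what makes the subadjunction and threshold machinery apply directly.
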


\begin{teo}\label{maintheorem2}
Let $X$ be a smooth Fano variety of dimension $n\geq 4$
and index $n-3$, with $H$ fundamental divisor.
\begin{enumerate}
\item If the dimension of $X$ is $n=4,\,5$, then $h^0(X,H)\geq n-2$.
\item If $n=6,\,7$ and the tangent bundle $T_X$ is $H$-semistable, then $h^0(X,H)\geq n-2$.
\item If the dimension of $X$ is $n\geq 8$, then $h^0(X,H)\geq n-2$.
\end{enumerate}
\end{teo}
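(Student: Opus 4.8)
The plan is to compute $h^{0}(X,H)$ by Riemann--Roch and to reduce the whole statement to a single inequality between the Chern numbers $c_2(X)\cdot H^{n-2}$ and $H^n$, which is then supplied either by semistability or by induction on $n$. First I would pass to Euler characteristics. Since $-K_X\sim_{\mathbb Q}(n-3)H$ with $H$ ample, the divisor $H-K_X\sim(n-2)H$ is ample, so Kawamata--Viehweg vanishing gives $h^{i}(X,H)=0$ for $i>0$ and hence $h^{0}(X,H)=\chi(X,H)$. More generally set $P(t)=\chi(X,tH)$. Serre duality gives the functional equation $P(t)=(-1)^{n}P(-(n-3)-t)$, so $P$ is symmetric about $t=-\tfrac{n-3}{2}$; combining Serre duality with Kawamata--Viehweg vanishing one checks that $P(-m)=0$ for $1\le m\le n-4$, while $P(0)=\chi(\mathcal O_X)=1$ because $X$ is Fano.

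These data pin $P$ down almost completely. Writing $P(t)=\tfrac{H^{n}}{n!}\prod_{k=1}^{n-4}(t+k)\,R(t)$ with $R$ monic of degree $4$, the functional equation forces $R$ to be an even polynomial in $s=t+\tfrac{n-3}{2}$, say $R=s^{4}+as^{2}+b$. The constant $b$ is fixed by $P(0)=1$, and $a$ is read off from the coefficient of $t^{n-2}$ in Hirzebruch--Riemann--Roch; crucially, every Chern number other than $c_2(X)\cdot H^{n-2}$ is eliminated by means of $\chi(\mathcal O_X)=1$. Evaluating at $t=1$ produces the closed formula
\[
h^{0}(X,H)=\chi(X,H)=(n-3)+\tfrac1{12}\,c_2(X)\cdot H^{n-2}+\tfrac1{12}\Bigl(2-\tbinom{n-3}{2}\Bigr)H^{n},
\]
so that $h^{0}(X,H)\ge n-2$ is \emph{equivalent} to the inequality
\[
c_2(X)\cdot H^{n-2}\ \ge\ \tfrac{n^{2}-7n+8}{2}\,H^{n}+12. \qquad(\star)
\]

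It remains to establish $(\star)$. For $n=4,5$ the coefficient $\tfrac{n^{2}-7n+8}{2}$ of $H^{n}$ is negative, so $(\star)$ is comparatively weak and one argues directly (this is Kawamata's case for $n=4$). For $n=6,7$ the hypothesis that $T_X$ is $H$-semistable yields the Bogomolov inequality $\bigl(2n\,c_2(X)-(n-1)c_1(X)^{2}\bigr)\cdot H^{n-2}\ge 0$, i.e. $c_2(X)\cdot H^{n-2}\ge\tfrac{(n-1)(n-3)^{2}}{2n}H^{n}$; since $\tfrac{(n-1)(n-3)^{2}}{2n}-\tfrac{n^{2}-7n+8}{2}=\tfrac{7n-9}{2n}>0$, this gives $(\star)$ as soon as $H^{n}$ is not too small, the finitely many small-degree cases being handled separately. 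For $n\ge 8$ I would induct on $n$: a general $Y\in|H|$ has at most canonical singularities by Theorem~\ref{maintheorem1} and is again a Fano of dimension $n-1$ and index $n-4$ with fundamental divisor $H|_Y$; the adjunction identities $(H|_Y)^{n-1}=H^{n}$ and $c_2(Y)\cdot(H|_Y)^{n-3}=c_2(X)\cdot H^{n-2}-(n-4)H^{n}$ show that $(\star)$ in dimension $n$ is literally the same as $(\star)$ in dimension $n-1$, while the restriction sequence $0\to\mathcal O_X\to\mathcal O_X(H)\to\mathcal O_Y(H|_Y)\to 0$ (whose connecting map vanishes because $H^{1}(X,\mathcal O_X)=0$) propagates the bound $h^{0}(X,H)=1+h^{0}(Y,H|_Y)$ down to the base cases.

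The main obstacle is non-vanishing. Every step that cuts by a general member of $|H|$, and every use of the equivalence $(\star)_n\Leftrightarrow(\star)_{n-1}$, presupposes $|H|\ne\emptyset$ so that $Y$ exists, and this cannot be extracted from Riemann--Roch without already knowing $(\star)$. For $n=6,7$ this circularity is precisely what the semistability hypothesis breaks, Bogomolov delivering $(\star)$---and hence non-vanishing---outright. For $n\ge 8$ one instead needs an unconditional non-vanishing input to start the induction (equivalently, an unconditional lower bound for $c_2(X)\cdot H^{n-2}$, which in this range should be mild enough to follow from the positivity of $-K_X$, for instance via the Harder--Narasimhan filtration of $T_X$ whose instability is controlled by $-K_X$ being ample). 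Making this control quantitative, and disposing of the low-degree configurations, is where I expect the real difficulty to lie.
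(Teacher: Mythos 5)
Your Riemann--Roch computation is correct and is exactly the paper's Lemma \ref{formula}: your coefficient $\tfrac{1}{12}\bigl(2-\tbinom{n-3}{2}\bigr)$ equals $\tfrac{-n^2+7n-8}{24}$, so your closed formula and the paper's agree, and your treatment of case (2) via Bogomolov is the paper's Proposition \ref{nonv_semist}. Two repairs are needed even there. For $n=4,5$ you say one ``argues directly''; the actual input is the nonnegativity $c_2(X)\cdot H^{n-2}\geq 0$, which the paper quotes from \cite[Corollary 6.2]{KMM} --- without it the negative coefficient of $H^n$ proves nothing. And your ``$+12$'' normalization of $(\star)$ manufactures phantom small-degree cases: since $h^0(X,H)=\chi(X,H)$ and $n-3$ are integers, the correction term $E=\tfrac{1}{12}c_2(X)\cdot H^{n-2}+\tfrac{-n^2+7n-8}{24}H^n=h^0(X,H)-(n-3)$ is itself an integer, so $E>0$ already forces $E\geq 1$. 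Bogomolov gives $E\geq\tfrac{7n-9}{24n}H^n>0$ for every value of $H^n$, so for $n=6,7$ there are no low-degree configurations to dispose of separately.

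The genuine gap is case (3), and you have diagnosed it yourself: the induction through general members of $|H|$ presupposes $|H|\neq\emptyset$, which is the statement being proved, and the closing gesture toward the Harder--Narasimhan filtration is not an argument --- ampleness of $-K_X$ alone does not control the instability of $T_X$. The missing idea is that for $n\geq 8$ the index satisfies $n-3>(n+1)/2$, and by the theorem of Hwang and Hwang--Mok (\cite[Theorem 2.11]{Hwang}, \cite{HwangMok}) a smooth Fano manifold of Picard number one and index greater than $(n+1)/2$ automatically has $H$-semistable tangent bundle; combined with Wi\'{s}niewski's result \cite{W2} that a Fano of index $n-3$ and dimension at least $8$ either has Picard number one or is $\mathbb{P}^4\times\mathbb{P}^4$ (where $h^0=25$ by direct computation), case (3) reduces outright to the Bogomolov argument of case (2), with no induction and no nonvanishing bootstrap. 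Your adjunction identities and the equivalence $(\star)_n\Leftrightarrow(\star)_{n-1}$ are correct and a pleasant consistency check, but as a proof strategy the descent is unusable: besides the circularity, it would strand you at $n=7$, where the semistability hypothesis of case (2) is not available for the (possibly singular) variety $Y$.
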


If $X$ is a Fano manifold of dimension at least six and index $n-3$
then we know by Wi\'{s}niewski's work ~\cite{W2}, ~\cite{W3}
that either $X$ has a very special structure or the Picard number equals one.
In the latter case it is conjectured that the tangent bundle is always semistable.
Using Wi\'{s}niewski's result it should be possible to
deal with the cases $n=6,7$ and Picard number at least two
and our statement covers also the cases of dimension 6 and 7 and Picard number one
at least modulo the semistability conjecture.

Note that that if $n \geq 5$ and $Y \in |H|$ is a general element,
then by Theorem \ref{maintheorem1} and the adjunction formula
the variety $Y$ is a $(n-1)$-dimensional Fano variety with at most Gorenstein canonical singularities
and index $(n-1)-3$. Thus we can apply Theorem \ref{maintheorem1} inductively to construct a 
``ladder'' (cf. \cite{Ambro} for the terminology) 
\[
X \supsetneq Y_1 \supsetneq  Y_2 \supsetneq \ldots \supsetneq Y_{n-3}
\]
such that $Y_{i+1} \in |H|_{Y_i}|$ and $Y_{n-3}$ is a Calabi-Yau threefold with at most canonical singularities. This technique
reduces the study of Fano varieties of index $n-3$ to the fourfold case. In particular we obtain:

\begin{cor}
Suppose that we are in the situation of Theorem \ref{maintheorem2}. Then the base locus of $|H|$
has dimension at most two.
\end{cor}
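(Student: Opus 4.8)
The plan is to push the base locus down the ladder $X = Y_0 \supsetneq Y_1 \supsetneq \cdots \supsetneq Y_{n-3}$ constructed above, thereby reducing the statement to the terminal Calabi--Yau threefold $Y_{n-3}$ (when $n=4$ the ladder has a single step and $Y_1$ is already a Calabi--Yau threefold). The key observation is that passing to a general member of a fundamental system alters neither the set-theoretic base locus nor, beyond a controlled drop, the number of global sections; on a threefold carrying an effective divisor the base locus is then forced to have dimension at most two.

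First I would record the surjectivity of the restriction maps $H^0(Y_i, H|_{Y_i}) \to H^0(Y_{i+1}, H|_{Y_{i+1}})$ for $0 \le i \le n-4$. As $Y_{i+1} \in |H|_{Y_i}|$ is linearly equivalent to $H|_{Y_i}$, these maps sit in the long exact sequence associated to
$$0 \to \mathcal{O}_{Y_i} \to \mathcal{O}_{Y_i}(H|_{Y_i}) \to \mathcal{O}_{Y_{i+1}}(H|_{Y_{i+1}}) \to 0,$$
and surjectivity follows from $H^1(Y_i, \mathcal{O}_{Y_i}) = 0$. Each $Y_i$ with $i \le n-4$ is Fano with Gorenstein canonical, hence rational, singularities, so this vanishing is Kawamata--Viehweg vanishing applied to $\mathcal{O}_{Y_i} = \mathcal{O}_{Y_i}(K_{Y_i} + (-K_{Y_i}))$. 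The same sequence shows that each restriction drops $h^0$ by $h^0(Y_i, \mathcal{O}_{Y_i}) = 1$, so starting from $h^0(X,H) \ge n-2$ (Theorem \ref{maintheorem2}) one obtains $h^0(Y_{n-3}, H|_{Y_{n-3}}) \ge (n-2) - (n-3) = 1$.

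Next I would check that the base locus is preserved set-theoretically at every step. Since $Y_{i+1}$ belongs to $|H|_{Y_i}|$, the base locus $\mathrm{Bs}\,|H|_{Y_i}|$ is contained in $Y_{i+1}$; together with the surjectivity above this identifies, for $p \in Y_{i+1}$, the vanishing of a section of $H|_{Y_i}$ at $p$ with the vanishing of its restriction, giving $\mathrm{Bs}\,|H|_{Y_i}| = \mathrm{Bs}\,|H|_{Y_{i+1}}|$ as closed subsets of $Y_{i+1}$. Iterating yields $\dim \mathrm{Bs}\,|H| = \dim \mathrm{Bs}\,|H|_{Y_{n-3}}|$. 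Finally, on the threefold $Y_{n-3}$ the bound $h^0(Y_{n-3}, H|_{Y_{n-3}}) \ge 1$ produces a nonzero section whose zero scheme is a Cartier divisor, a proper closed subset of pure codimension one; the base locus lies inside it and therefore has dimension at most two, which is the asserted bound. The main obstacle is not any of these formal steps but the input that the members of the ladder have rational singularities: it is precisely the canonicity supplied by Theorem \ref{maintheorem1} that makes Kawamata--Viehweg vanishing available on the singular $Y_i$, and hence makes the surjectivity, and with it the whole reduction, valid uniformly in $n$.
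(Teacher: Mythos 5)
Your proof is correct and takes essentially the same approach as the paper: the corollary is stated there without a separate proof, as an immediate consequence of the ladder $X \supsetneq Y_1 \supsetneq \cdots \supsetneq Y_{n-3}$ built from Theorems \ref{maintheorem1} and \ref{maintheorem2}, and your descent along that ladder is exactly the intended argument. The details you supply --- surjectivity of the restriction maps via $H^1(Y_i,\mathcal{O}_{Y_i})=0$ (Kawamata--Viehweg vanishing on the canonical Fano rungs), the consequent drop of $h^0$ by exactly one per step giving $h^0(Y_{n-3},H|_{Y_{n-3}})\geq (n-2)-(n-3)=1$, and the set-theoretic identification of the base loci down to the Calabi--Yau threefold --- are precisely what makes the paper's ``In particular we obtain'' rigorous.
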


This ``ladder technique" was used in ~\cite{HV}
to prove an integral version of the Hodge conjecture
for Fano manifolds of index $n-3$.\\
By Theorem \ref{maintheorem2} the condition $h^0(X,H)\geq n-2$ in ~\cite[Theorem 1.6]{HV} 
is satisfied
at least if $n>7$.
Thus the following is a corollary of ~\cite[Theorem 1.6]{HV} and Theorem \ref{maintheorem2}:
\begin{teo}
Let $X$ be a smooth Fano variety of dimension $n>7$ and index $n-3$.
Then the group $H^{2n-2}(X,\mathbb{Z})$ is generated over $\mathbb{Z}$
by classes of curves (equivalently, $Z^{2n-2}(X)=0$).
\end{teo}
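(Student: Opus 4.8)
The statement is a formal corollary of the two inputs it cites, so the plan is simply to check that their hypotheses match up and then chain them. First I would record that the assumption $n>7$ means $n\geq 8$, which is exactly the regime of case (3) of Theorem \ref{maintheorem2}. That case is unconditional: unlike cases (1) and (2) it requires neither a restriction on the dimension nor semistability of the tangent bundle, so it applies to every smooth Fano $n$-fold of index $n-3$ with $n\geq 8$ and delivers the numerical bound $h^0(X,H)\geq n-2$. This is precisely why the threshold in the corollary is $n>7$ rather than something smaller: for $n\leq 7$ the bound on global sections is available only for $n=4,5$ or under the semistability assumption, whereas here we want an unconditional conclusion.

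Next I would feed this bound into \cite[Theorem 1.6]{HV}. That result establishes, for a smooth Fano manifold of dimension $n$ and index $n-3$ satisfying $h^0(X,H)\geq n-2$, that every class in $H^{2n-2}(X,\mathbb{Z})$ is a $\mathbb{Z}$-linear combination of classes of curves; the reformulation $Z^{2n-2}(X)=0$ is just the definition of the defect group measuring the surjectivity of the cycle class map $Z_1(X)\to H^{2n-2}(X,\mathbb{Z})$. Since the hypothesis of \cite[Theorem 1.6]{HV} is now verified, its conclusion gives the theorem verbatim.

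The only genuine point to confirm — and hence the main, if modest, obstacle — is that $h^0(X,H)\geq n-2$ really is the sole extra hypothesis of \cite[Theorem 1.6]{HV} beyond $X$ being a smooth Fano of index $n-3$, that is, that no additional constraint (for instance on the Picard number, or on the singularities of a general $Y\in|H|$) is secretly required to invoke it. All the substantive work lives upstream, in Theorem \ref{maintheorem2}(3), which itself rests on Theorem \ref{maintheorem1} and the ladder construction described above; once that lower bound on the number of global sections is in hand, the passage to the integral Hodge statement for $1$-cycles is immediate.
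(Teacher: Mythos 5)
Your proposal is correct and matches the paper's own argument exactly: the paper deduces this theorem by noting that Theorem \ref{maintheorem2}(3) supplies the hypothesis $h^0(X,H)\geq n-2$ of \cite[Theorem 1.6]{HV} unconditionally once $n>7$, and then invoking that result. Your remark that the only point needing care is whether \cite[Theorem 1.6]{HV} requires nothing beyond this bound is apt, and indeed it does not.
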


\bigskip

\thanks{{\bfseries Acknowledgements.}
This article has been originally developed during my undergraduate thesis at the University of Paris VI
written under the supervision of Andreas H\"oring. 
I would like to thank Andreas H\"oring for all the things he taught me during this work, 
for constantly guiding and supporting me,
for all the remarks on several drafts of this paper.
He made me also notice the result of Hwang that improves the statement of
Theorem \ref{maintheorem2}.
I would also like to thank Claire Voisin and Andreas Horing
 for bringing to my attention the fact that Theorem \ref{maintheorem2},
 together with their result,
 leads to Theorem 1.4.
}

\section{Preliminaries}
We will work over $\mathbb{C}$ and
use the standard notation from ~\cite{H}.
In the following $\equiv$, $\sim$ and $\sim_{\mathbb{Q}}$ will respectively indicate
numerical, linear and $\mathbb{Q}$-linear equivalence of divisors.
The following definitions are taken from ~\cite{K} and ~\cite{KM}.

\begin{defi}
Let $(X,\Delta)$ be a pair, $\Delta=\sum a_i\Delta_i$ with $a_i \in\mathbb{Q}^{+}$. 
Suppose that $K_X+\Delta$ is $\mathbb{Q}$-Cartier.
Let $f\colon Y\rightarrow X$ be a birational morphism, $Y$ normal.
We can write
$$K_Y\equiv f^{\ast}(K_X+\Delta)+\sum a(E_i,X,\Delta) E_i.$$
where $E_i\subseteq Y$ are distinct prime divisors and $a(E_i,X,\Delta)\in\mathbb{R}$.
Furthermore we adopt the convention that a nonexceptional divisor $E$ appears in the sum
if and only if $E=f_{\ast}^{-1}D_i$ for some $i$
and then with coefficient $a(E,X,\Delta)=-a_i$.\\
The $a(E_i,X,\Delta)$ are called discrepancies.
\end{defi}

\begin{defi}
We set
$$\rm{discrep}(X,\Delta)
=\inf \{a(E,X,\Delta)\;|\; E\, exceptional\, divisor\, over\, X\}.$$
A pair $(X,\Delta)$ is defined to be
\begin{itemize}
\item klt (kawamata log terminal) if $\rm{discrep}(X,\Delta)>-1$ and $\lfloor\Delta\rfloor= 0$,
\item plt (purely log terminal) if $\rm{discrep}(X,\Delta)> -1$,
\item lc (log canonical) if $\rm{discrep}(X,\Delta) \geq -1.$
\end{itemize}
\end{defi}

\begin{defi}
Let $(X,\Delta)$ be a klt pair, $D$ an effective $\mathbb{Q}$-Cartier $\mathbb{Q}$-divisor.
The log canonical threshold of $D$ for $(X,\Delta)$ is
$$\rm{lct}((X,\Delta),D)=\sup\{t\in\mathbb{R}^+|(X,\Delta+tD)\: is\: lc\}.$$
\end{defi}

\begin{defi}
Let $(X,\Delta)$ be a lc pair, $f\colon X'\rightarrow X$ a log resolution.
Let $E\subseteq X'$ be a divisor on $X'$ of discrepancy $-1$. Such a divisor is called a log canonical place. 
The image $f(E)$ is called center of log canonicity of the pair.
If we write $$K_{X'}\equiv \mu^{\ast}(K_X+\Delta)+E,$$
we can equivalently define a place as an irreducible component of $\lfloor -E \rfloor$.
We denote $CLC(X,\Delta)$ the set of all centers.
\end{defi}

\begin{defi}
Let $(X,\Delta)$ be a log canonical pair.
A minimal center for $(X,\Delta)$ is an element of $CLC(X,\Delta)$ that is minimal 
with respect to inclusion.
\end{defi}

\begin{defi}
Let $(X,\Delta)$ be a log canonical pair. A center $W$ is said to be exceptional
if there exists a log resolution $\mu\colon X'\rightarrow X$ for the pair $(X,\Delta)$
such that:
\begin{itemize}
\item there exists only one place $E_W\subseteq X'$ whose image in $X$ is $W$;
\item for every place $E'\neq E_W$, we have $\mu(E)\cap W=\emptyset$.
\end{itemize}
\end{defi}

\begin{lem} \label{c1/2} Let $X$ be a normal Gorenstein projective variety 
and $Y$ an effective Cartier divisor. 
Suppose that $(X,Y)$ is a non plt pair with and let $c$ be its log canonical threshold.
If all the minimal centers of $(X,cY)$ have codimension one, then $c\leq 1/2$.
\end{lem}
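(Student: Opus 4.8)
The plan is to single out a distinguished codimension-one log canonical center, read off the value of $c$ from its multiplicity, and then derive a contradiction from the hypothesis that \emph{all} minimal centers are divisorial by producing a deeper center coming from the failure of plt.

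First I would record the elementary but crucial observation that, since $X$ is normal, it is regular in codimension one; hence the generic point of any prime divisor is a smooth point of $X$. Because $c$ is the log canonical threshold, the pair $(X,cY)$ is log canonical but not klt, so $\mathrm{CLC}(X,cY)\neq\emptyset$ and a minimal center exists; by hypothesis it is a prime divisor $Z$. Working at the generic point $\eta$ of $Z$, where $X$ is smooth and the other components of $Y$ are absent, the pair $(X,cY)$ looks locally like $(\,\cdot\,,c\,m\,Z)$ with $m=\operatorname{mult}_Z Y$. A one-variable computation shows that $Z$ is a log canonical center through $\eta$ if and only if the coefficient $cm$ equals $1$; thus $c=1/m$. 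This already reduces the lemma to proving $m\geq 2$.

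Next I would argue by contradiction, assuming $m=1$, i.e. $c=1$, so that $(X,cY)=(X,Y)$ and $Y$ is reduced along $Z$. Here I would invoke the Gorenstein hypothesis: since $K_X$ and $Y$ are both Cartier, $K_X+Y$ is Cartier and every discrepancy $a(E,X,Y)$ is an integer. Because $(X,Y)$ is log canonical but not plt, some exceptional divisor $E$ over $X$ has $a(E,X,Y)\leq -1$, and integrality together with the lc bound forces $a(E,X,Y)=-1$. Thus $E$ is a log canonical place and its center $W=c_X(E)$ lies in $\mathrm{CLC}(X,Y)$. Using that an exceptional divisor over a normal variety has center of codimension $\geq 2$ (the resolution $\mu\colon X'\to X$ is proper birational with $X$ normal, hence an isomorphism in codimension one), I get $\operatorname{codim}_X W\geq 2$. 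Among the centers contained in $W$ there is, by the descending chain condition on closed subvarieties, one minimal with respect to inclusion; such a center is automatically a minimal center of the whole pair, hence by hypothesis has codimension one, contradicting $\operatorname{codim}_X W\geq 2$. Therefore $m\geq 2$ and $c=1/m\leq 1/2$.

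The main obstacle I expect is the bridge between the two hypotheses, which a priori live on different pairs: the codimension-one condition is about $(X,cY)$, whereas ``non plt'' is a statement about $(X,Y)$. The computation $c=1/m$ at the minimal divisorial center is exactly what pins down $c=1$ in the problematic case, forcing the two pairs to coincide so that the non-plt exceptional place genuinely produces a center of $(X,cY)$ of codimension $\geq 2$; keeping this identification honest, and correctly using Gorenstein integrality to upgrade $a(E,X,Y)\leq -1$ to $a(E,X,Y)=-1$, is where the argument must be handled with care.
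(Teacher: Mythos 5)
Your proof is correct and takes essentially the same route as the paper's: your identity $c=1/\operatorname{mult}_Z Y$ at a divisorial minimal center is just the paper's observation that codimension-one components carry discrepancy $-c\cdot\operatorname{mult}$ (so non-reducedness forces $c\leq 1/2$), and your $m=1$ case is the paper's $c=1$ case, where non-plt-ness produces an exceptional log canonical place whose center, and hence some minimal center, has codimension at least two. Your appeal to Gorenstein integrality to pin the discrepancy at exactly $-1$ is merely a more explicit justification of the step the paper settles ``by definition,'' not a different argument.
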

\begin{proof}
If $Y$ is not reduced, then $c\leq 1/2$.\\
If $Y$ is reduced then we claim that $(X,cY)$ has a minimal center of codimension at least two.
Suppose first that $c<1$. 
Since $Y$ is reduced 
the discrepancy of every $E\subseteq Y$ of codimension one equals to $c$. 
Then $E$ cannot be a center.
Suppose now that $c=1$. 
The pair $(X,Y)$ is not plt and $Y$ is reduced, so by definition there exists a center of codimension at least two.
\end{proof}

\begin{lem}~\cite[Lemma 5.1]{Ambro}\label{lognonpltinbaselocus}
Let $X$ be a normal variety and $\Delta$ a divisor on $X$ such that $(X,\Delta)$ is klt.
Let $H$ be an ample Cartier divisor on $X$ and $Y\in |H|$ a general element.
Suppose that $(X,\Delta+Y)$ is not plt and let $c$ be the log canonical threshold.
Then the union of all the centers of log canonicity of $(X,\Delta+cY)$
is contained in the base locus of $|H|$.
\end{lem}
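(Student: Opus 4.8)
The plan is to reduce everything to a single birational model on which both the pair $(X,\Delta)$ and the linear system $|H|$ become transparent, and then to read off the log canonical places by a discrepancy computation. First I would choose a log resolution $\mu\colon X'\to X$ of $(X,\Delta)$ with the additional property that the pulled-back system decomposes as $\mu^{\ast}|H|=|M|+F$, where $|M|$ is base-point free and $F$ is the fixed divisor, and such that $\mathrm{Exc}(\mu)$, the strict transform $\mu^{-1}_{\ast}\Delta$ and $\mathrm{Supp}(F)$ together form a simple normal crossing divisor. The geometric fact that ties the computation to the conclusion is that $\mathrm{Supp}(F)$ is the base locus of $\mu^{\ast}|H|$: since every member $\mu^{\ast}Y$ contains $F$, one has $\mathrm{Supp}(F)\subseteq \mu^{-1}(\mathrm{Bs}\,|H|)$ as sets, hence $\mu(\mathrm{Supp}(F))\subseteq \mathrm{Bs}\,|H|$.

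Next I would exploit the genericity of $Y$. For $Y\in|H|$ general, its strict transform $Y'=\mu^{-1}_{\ast}Y$ is a general member of the base-point free system $|M|$ on the smooth variety $X'$; hence by Bertini $Y'$ is smooth and meets $\mathrm{Exc}(\mu)+\mu^{-1}_{\ast}\Delta+\mathrm{Supp}(F)$ transversally, so that it contains no exceptional divisor and creates no new simple normal crossing failure. Writing $\mu^{\ast}Y=Y'+F$, the discrepancy of any exceptional divisor $E_i$ with respect to $(X,\Delta+cY)$ becomes $a(E_i,X,\Delta+cY)=a(E_i,X,\Delta)-c\,\mathrm{mult}_{E_i}(F)$, since $\mathrm{mult}_{E_i}(\mu^{\ast}Y)=\mathrm{mult}_{E_i}(F)$, while the discrepancy of the non-exceptional divisor $Y'$ is simply $-c$.

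Now I would identify the log canonical places. Because $(X,\Delta)$ is klt, every $a(E_i,X,\Delta)>-1$, so an exceptional divisor $E_i$ is a log canonical place of $(X,\Delta+cY)$ precisely when $a(E_i,X,\Delta)-c\,\mathrm{mult}_{E_i}(F)=-1$, which forces $\mathrm{mult}_{E_i}(F)=(a(E_i,X,\Delta)+1)/c>0$. Thus every such $E_i$ is a component of $F$, and its center $\mu(E_i)$ lies in $\mu(\mathrm{Supp}(F))\subseteq \mathrm{Bs}\,|H|$. The only remaining potential place is $Y'$ itself, with center $Y$; but since the plt condition constrains exceptional divisors only, the hypothesis that $(X,\Delta+Y)$ is not plt guarantees that an exceptional log canonical place genuinely occurs, and the transversality supplied by Bertini ensures that no further center of log canonicity arises away from the image of the fixed part. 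Taking the union over all the relevant places then yields the desired containment in $\mathrm{Bs}\,|H|$.

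I expect the main obstacle to be the construction and justification of this simultaneous resolution together with the Bertini step: one must verify carefully that a general member of $|M|$ is transverse to the full snc boundary $\mathrm{Exc}(\mu)+\mu^{-1}_{\ast}\Delta+\mathrm{Supp}(F)$, so that discrepancies computed on $X'$ genuinely compute those of $(X,\Delta+cY)$ and no log canonical center is manufactured outside $\mathrm{Supp}(F)$. The delicate point is the boundary case $c=1$, where $Y'$ attains discrepancy $-1$; here one leans on the fact that the failure of plt is detected by an exceptional divisor, and hence already by a center inside $\mathrm{Bs}\,|H|$, rather than by the general member $Y$ itself.
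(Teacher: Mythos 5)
Your proposal cannot be compared against an in-paper argument, because the paper gives none: the lemma is quoted verbatim from ~\cite[Lemma 5.1]{Ambro} as a known result. What you have written is essentially a reconstruction of the standard (Ambro-style) proof, and it is correct in its main line: fix a log resolution $\mu\colon X'\to X$ of $(X,\Delta)$ together with the base ideal of $|H|$, so that $\mu^{\ast}|H|=|M|+F$ with $|M|$ free; for general $Y$ the strict transform $Y'$ is a general member of $|M|$, and Bertini makes $Y'$ smooth and transverse to the snc configuration, so this one fixed resolution is a log resolution of $(X,\Delta+cY)$ and (since the paper defines places and centers on a log resolution, where the centers are read off from the coefficient-one part) it suffices for locating all centers. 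The bookkeeping $a(E_i,X,\Delta+cY)=a(E_i,X,\Delta)-c\,\mathrm{mult}_{E_i}(F)$ plus kltness of $(X,\Delta)$ and $c>0$ forces $\mathrm{mult}_{E_i}(F)>0$ for any exceptional place, and $\mathrm{Bs}\,\mu^{\ast}|H|=\mu^{-1}(\mathrm{Bs}|H|)$ (as $\mu_{\ast}\mathcal{O}_{X'}=\mathcal{O}_X$) gives $\mu(E_i)\subseteq\mathrm{Bs}|H|$.

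Two points deserve attention. First, under the paper's convention the non-exceptional potential places are not only $Y'$ but also the strict transforms $\Delta_j'$ of the components of $\Delta$, which carry discrepancy $-d_j-c\,\mathrm{mult}_{\Delta_j'}(F)$ with $d_j<1$; this reaches $-1$ only if $\mathrm{mult}_{\Delta_j'}(F)>0$, i.e. only if $\Delta_j$ is a fixed component of $|H|$, so its center again lies in $\mathrm{Bs}|H|$ --- your computation covers this verbatim, but you should say it rather than asserting that $Y'$ is ``the only remaining potential place.'' Second, your reading of the boundary case $c=1$ is the right one and matches how the lemma is actually used later in the paper (``$(X,cY)$ is plt in the complement of the base locus of $|H|$''): taken completely literally, when $c=1$ the place $Y'$ has discrepancy $-1$ and its center $Y$ is certainly not contained in $\mathrm{Bs}|H|$, so the statement must be understood for the exceptional (equivalently, plt-relevant) places, exactly as you do. To close your own argument at $c=1$, observe that on your log smooth model a boundary with a single smooth coefficient-one component $Y'$ and all other coefficients $<1$ is plt; hence the hypothesis that $(X,\Delta+Y)$ is not plt forces some exceptional $E_i$ with $a(E_i,X,\Delta)-\mathrm{mult}_{E_i}(F)\leq -1$, whose center then lies in $\mathrm{Bs}|H|$ as required.
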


\section{Non vanishing of the fundamental divisor}
Let $X$ be a Fano variety 
of dimension $n\geq 4$ and index $n-3$
with at most Gorenstein canonical singularities. Let $H$ be the fundamental divisor.
Then we have $-K_X\ \sim (n-3)H$.
We want to study the non vanishing of $h^0(X,H)$.
To do so, since by Kawamata-Viehweg vanishing theorem $\chi(X,H)=h^0(X,H)$,
we look for an ``explicit'' expression for $\chi(X,H)$.\\
For $j\in\{-1\ldots -(n-4)\}$ we have, by Kodaira vanishing and Serre duality,
\begin{eqnarray}\label{ann}
\chi(X,jH)&=(-1)^n h^n(X,jH)=(-1)^n h^0(X,-(n-3+j)H)=0.
\end{eqnarray}
Hence we can write
\begin{eqnarray}\label{abcd}
\chi(X,tH)=\frac{H^n}{n!}\prod_{j=1}^{n-4}(t+j)(t^4+at^3+bt^2+ct+d).
\end{eqnarray}
By Serre duality, the polynomial $\chi(X,tH)$ has the following symmetry property for every integer $t$:

\begin{eqnarray}\label{symm}
\chi(X,tH)&=(-1)^n\chi(X,K_X-tH)=(-1)^n\chi(X,-(n-3+t)H).
\end{eqnarray}

By ~\cite[Cor 1.4.4]{BCHM} there exists a birational map
$\mu\colon X'\rightarrow X$ where $X'$ has at most Gorenstein terminal singularities and $\mu^{\ast}K_X=K_{X'}$.
Since canonical singularities are rational
we have the equality
$$
\chi(X,tH)=\chi(X',t\mu^{\ast}H).
$$

Moreover, for a projective variety $X'$ with at most Gorenstein terminal singularities and $D$ a Cartier divisor on $X'$,
we have the following Riemann-Roch-formula
\begin{eqnarray}\label{rrterminal}
\chi(X',tD)=&\frac{D^n}{n!}t^n+\frac{-K_{X'} D^{n-1}}{2(n-1)!}t^{n-1}+\frac{(K_{X'}^2+c_2(X'))D^{n-2}}{12(n-2)!}t^{n-2}
\end{eqnarray}
\begin{eqnarray*}
&+p(t)+\chi(X',\mathcal{O}_{X'}),
\end{eqnarray*}
where $p(t)$ is a polynomial of degree $n-3$ and with no constant term.

By using the equalities \eqref{ann} and \eqref{symm} 
and applying \eqref{rrterminal} to $D=\mu^{\ast}H$
it is possible to compute the coefficients $a,b,c,d$ in \eqref{abcd} and obtain
$$
\begin{array}{ll}
a&=2(n-3)\\
&\\
c&=(n-3)(b-(n-3)^2)\\
&\\
b&=\frac{-n^4+8n^3+9n^2-160n+264}{24}+\frac{n(n-1)}{12}\frac{c_2(X)H^{n-2}}{H^n}\\
&\\
d&=\frac{n(n-1)(n-2)(n-3)\chi(X,\mathcal{O}_X)}{H^n}.
\end{array}
$$
Since $\chi(X,\mathcal{O}_X)=\chi(X',\mathcal{O}_{X'})=1$
for any Fano variety,
we obtain the following lemma.

\begin{lem}\label{formula}
Let $X$ be a Fano variety 
of dimension $n\geq 4$ and index $n-3$ with at most Gorenstein canonical singularities.
Let $H$ be a fundamental divisor.
Let $\mu\colon X'\rightarrow X$ be a birational morphism 
with $X'$ terminal and $K_{X'}=\mu^{\ast}K_X$.
Then
$$\chi(X,H)=\frac{H^n}{24}(-n^2+7n-8)+\frac{c_2(X')\mu^{\ast}H^{n-2}}{12}+n-3.$$
\end{lem}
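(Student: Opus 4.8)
The plan is to read off $\chi(X,H)$ as the value at $t=1$ of the Euler characteristic polynomial \eqref{abcd}, whose shape has already been determined. The vanishings \eqref{ann}, the factorization \eqref{abcd}, the Serre-duality symmetry \eqref{symm}, the passage to the terminal model $\mu\colon X'\to X$ (using that canonical singularities are rational, so that $\chi(X,tH)=\chi(X',t\mu^{\ast}H)$), and the Riemann--Roch formula \eqref{rrterminal} together pin down the coefficients $a,b,c,d$ as stated above. It then remains only to substitute $t=1$ and to simplify.

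First I would evaluate the prefactor at $t=1$: the product in \eqref{abcd} becomes $\prod_{j=1}^{n-4}(1+j)=(n-3)!$, so that
\[
\frac{H^n}{n!}\prod_{j=1}^{n-4}(1+j)=\frac{(n-3)!\,H^n}{n!}=\frac{H^n}{n(n-1)(n-2)},
\]
which is valid for every $n\geq 4$ (for $n=4$ the product is empty and equals $1$). Hence $\chi(X,H)=\frac{H^n}{n(n-1)(n-2)}\bigl(1+a+b+c+d\bigr)$.

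Next I would substitute the coefficients, using $\chi(X,\mathcal{O}_X)=1$ so that $d=n(n-1)(n-2)(n-3)/H^n$. Since $c=(n-3)(b-(n-3)^2)$, the terms depending on $b$ combine as $b+c=(n-2)b-(n-3)^3$. The three summands of the asserted formula then appear separately: the $d$-term contributes exactly $n-3$; the $c_2$-part of $b$, multiplied by the factor $(n-2)$ from $b+c$ and by the prefactor $\frac{H^n}{n(n-1)(n-2)}$, contributes $\frac{c_2(X')\mu^{\ast}H^{n-2}}{12}$; and the remaining purely numerical part collapses, by the polynomial identity
\[
24\bigl(1+2(n-3)-(n-3)^3\bigr)+(n-2)(-n^4+8n^3+9n^2-160n+264)=n(n-1)(n-2)(-n^2+7n-8),
\]
to $\frac{H^n}{24}(-n^2+7n-8)$.

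The genuine computational core lies not in this final evaluation but in the determination of $a,b,c,d$, already carried out above: comparing \eqref{abcd} with \eqref{rrterminal} fixes $b$ and $d$ through the coefficient of $c_2$ and the value $\chi(X,\mathcal{O}_X)$, while imposing the symmetry \eqref{symm} forces $a=2(n-3)$ and expresses $c$ in terms of $b$. Granting these, the lemma reduces to a routine substitution, and the only delicate point is the bookkeeping behind the polynomial identity displayed above, which I would confirm by expanding both sides (for instance, at $n=4$ both sides equal $96$).
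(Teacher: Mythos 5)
Your proposal is correct and follows exactly the paper's route: the paper determines the coefficients $a,b,c,d$ in the discussion preceding the lemma (via \eqref{ann}, \eqref{symm}, the terminal model and \eqref{rrterminal}), and the lemma is then just the evaluation of \eqref{abcd} at $t=1$ with $\chi(X,\mathcal{O}_X)=1$, which is precisely your substitution. Your explicit bookkeeping — the prefactor $(n-3)!/n! = 1/(n(n-1)(n-2))$, the combination $b+c=(n-2)b-(n-3)^3$, and the polynomial identity $24\bigl(1+2(n-3)-(n-3)^3\bigr)+(n-2)(-n^4+8n^3+9n^2-160n+264)=n(n-1)(n-2)(-n^2+7n-8)$ — checks out (both sides expand to $-n^5+10n^4-31n^3+38n^2-16n$), so the argument is complete.
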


\begin{prop}
Let $X$ be a Fano variety with at most Gorenstein canonical singularities.
Suppose that the dimension of $X$ is $n=4,\,5$, and the index $n-3$.
Then $h^0(X,H)\geq n-2$.
\end{prop}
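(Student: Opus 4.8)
The plan is to read $\chi(X,H)$ off Lemma \ref{formula} and to show that its non-integer contribution is strictly positive, so that integrality forces $\chi(X,H)\geq n-2$. Explicitly, Lemma \ref{formula} gives
$$\chi(X,H)=\frac{H^n}{24}(-n^2+7n-8)+\frac{c_2(X')\cdot\mu^{\ast}H^{n-2}}{12}+(n-3).$$
First I would observe that for $n=4$ and $n=5$ the coefficient $-n^2+7n-8$ equals $4$ and $2$ respectively, hence is strictly positive. Since $H$ is ample we have $H^n>0$, so the first summand is strictly positive. The content of the proposition is thus to control the second summand, i.e. to prove the nonnegativity $c_2(X')\cdot\mu^{\ast}H^{n-2}\geq 0$.

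Granting that inequality, the first two summands add up to a strictly positive real number, so $\chi(X,H)>n-3$. By the Kawamata--Viehweg vanishing theorem $\chi(X,H)=h^0(X,H)$ is a nonnegative integer; an integer that is strictly larger than the integer $n-3$ must be at least $n-2$, and this yields $h^0(X,H)\geq n-2$. It is worth stressing that this is exactly where the hypotheses $n=4,5$ enter: for $n\geq 6$ the coefficient $-n^2+7n-8$ becomes negative, so the bare positivity of $c_2(X')\cdot\mu^{\ast}H^{n-2}$ no longer suffices and one is forced to bound this intersection number from below (via a Bogomolov-type estimate, hence the semistability assumption in Theorem \ref{maintheorem2}).

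The main obstacle is therefore the inequality $c_2(X')\cdot\mu^{\ast}H^{n-2}\geq 0$. Here I would exploit that $-K_{X'}=\mu^{\ast}(-K_X)=(n-3)\mu^{\ast}H$ is nef and that $X'$ is terminal Gorenstein, so that its singular locus has codimension at least three and a general complete intersection surface $S$, cut out by $n-2$ very ample divisors $A$, is smooth and disjoint from $\mathrm{Sing}(X')$ by Bertini. Since the intersection number $c_2(X')\cdot A^{n-2}$ depends continuously on the numerical class and $\mu^{\ast}H$ is nef, i.e. a limit of ample classes, it is enough to prove $c_2(X')\cdot A^{n-2}\geq 0$ for every ample $A$ and then pass to the limit $A\to\mu^{\ast}H$. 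Restricting to $S$ and using the conormal sequence $0\to T_S\to T_{X'}|_S\to N_{S/X'}\to 0$, the number $c_2(X')\cdot A^{n-2}$ equals $c_2(T_{X'}|_S)$, which I would control through a Miyaoka-type semipositivity statement for the tangent sheaf of a variety with nef anticanonical class. This semipositivity of $c_2$, rather than the elementary bookkeeping with the Euler characteristic of $S$, is the genuinely hard input, and it is the step I expect to require the most care.
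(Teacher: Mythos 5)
Your overall skeleton is exactly the paper's: positivity of the coefficient $-n^2+7n-8$ for $n=4,5$, nonnegativity of $c_2(X')\cdot\mu^{\ast}H^{n-2}$, and the identification $h^0(X,H)=\chi(X,H)$ via Kawamata--Viehweg. In fact your integrality step is a genuine improvement in precision: the paper concludes with the chain $\chi(X,H)\geq \frac{H^n}{24}(-n^2+7n-8)+n-3\geq n-2$, whose last inequality literally requires $H^n(-n^2+7n-8)\geq 24$ (false in general, e.g.\ a double cover of $\mathbb{P}^4$ branched in degree $10$ has $n=4$, $H^4=2$), so the argument really does need your observation that an integer strictly greater than $n-3$ is at least $n-2$.

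The divergence, and the genuine gap, is in how you treat the key inequality $c_2(X')\cdot\mu^{\ast}H^{n-2}\geq 0$. The paper does not prove it: it cites \cite[Corollary 6.2]{KMM}, which supplies precisely this semipositivity of $c_2$ for terminal varieties with nef anticanonical divisor. You instead propose to derive it by cutting down to a general smooth surface $S$ (fine: $\mathrm{codim}\,\mathrm{Sing}(X')\geq 3$, and the limit from ample $A$ to the nef class $\mu^{\ast}H$ is harmless) and then invoking ``a Miyaoka-type semipositivity statement for the tangent sheaf of a variety with nef anticanonical class.'' As stated this tool does not exist off the shelf in the form you need: Miyaoka's generic semipositivity concerns the \emph{cotangent} sheaf of \emph{non-uniruled} varieties (equivalently the case $K_X$ nef), whereas $X'$ here is birational to a Fano variety, hence uniruled --- the opposite situation. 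The statement you actually need (generic nefness of $T_X$ when $-K_X$ is nef, implying $c_2(X)\cdot H_1\cdots H_{n-2}\geq 0$) is a nontrivial theorem in its own right, available in the singular $\mathbb{Q}$-sheaf setting exactly through the cited result of Keel--Matsuki--McKernan; note also that restricting $T_{X'}$ to $S$ alone gives no positivity of $c_2(T_{X'}|_S)$ without such an input, since $c_2$ of a restricted bundle on a surface can well be negative. So your proposal is correct in structure and correctly isolates the hard point, but the step you flag as requiring care would fail along the route you sketch; the fix is simply to quote \cite[Corollary 6.2]{KMM} as the paper does.
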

\begin{proof}
We remark that if $n=4,\,5$ then $-n^2+7n-8>0$.
By ~\cite[Corollary 6.2]{KMM} we have that $c_2(X')\mu^{\ast}H^{n-2}\geq 0$.
Thus
$$
\begin{array}{rl}
h^0(X,H)&=\chi(X,H)=\frac{H^n}{24}(-n^2+7n-8)+\frac{c_2(X')\mu^{\ast}H^{n-2}}{12}+n-3\\
&\\
&\geq \frac{H^n}{24} (-n^2+7n-8)+n-3\geq n-2.\\
\end{array}
$$

\end{proof}

\begin{prop}\label{nonv_semist}
Let $X$ be a Fano variety of dimension $n\geq 4$ with at most Gorenstein canonical singularities
and index $n-3$, with $H$ fundamental divisor.
Suppose that the tangent bundle $T_X$ is $H$-semistable.
Then $h^0(X,H)\geq n-2$.
\end{prop}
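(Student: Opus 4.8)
The plan is to feed a Bogomolov-type inequality coming from semistability into the Riemann--Roch expression of Lemma \ref{formula}. The only term in that formula whose sign is not automatically favourable is $\frac{H^n}{24}(-n^2+7n-8)$, which becomes negative precisely when $n\geq 6$; for such $n$ the mere nonnegativity $c_2(X')\mu^{\ast}H^{n-2}\geq 0$ used in the previous proposition is too weak, and one needs a genuine \emph{lower} bound on the $c_2$-term. Semistability of $T_X$ is exactly what produces such a bound.

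First I would record the Chern data. Since the index is $n-3$ we have $c_1(T_X)=-K_X\sim (n-3)H$, so $c_1(T_X)^2\cdot H^{n-2}=(n-3)^2H^n$. The tangent sheaf has rank $n$, and $H$-semistability yields the Bogomolov inequality
\[
\bigl(2n\,c_2(X)-(n-1)c_1(X)^2\bigr)\cdot H^{n-2}\geq 0,
\]
which rearranges to
\[
c_2(X)\cdot H^{n-2}\geq \frac{(n-1)(n-3)^2}{2n}\,H^n.
\]

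Next I would substitute this bound into Lemma \ref{formula}. Writing the estimate out and factoring $H^n/24$ from the two top-degree intersection terms, the bracket becomes
\[
(-n^2+7n-8)+\frac{(n-1)(n-3)^2}{n}=7-\frac{9}{n},
\]
after expanding $(n-1)(n-3)^2=n^3-7n^2+15n-9$. Hence
\[
\chi(X,H)\geq \frac{H^n}{24}\Bigl(7-\frac{9}{n}\Bigr)+(n-3).
\]
For every $n\geq 4$ one has $7-9/n>0$, and $H^n>0$ because $H$ is ample; thus $\chi(X,H)>n-3$. Since $h^0(X,H)=\chi(X,H)$ by Kawamata--Viehweg vanishing and this is an integer strictly larger than $n-3$, we conclude $h^0(X,H)\geq n-2$.

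The main obstacle is not the numerology above but the passage through the singular model. Semistability is a hypothesis on $T_X$ over $X$, whereas Lemma \ref{formula} is phrased with $c_2(X')\mu^{\ast}H^{n-2}$ on the terminal crepant model $\mu\colon X'\to X$. I would therefore need to make precise the Bogomolov inequality for the (reflexive) tangent sheaf on the normal Gorenstein canonical variety $X$ --- appealing to the theory of $H$-semistable sheaves on normal projective varieties in characteristic zero --- and to reconcile the second Chern class appearing there with $c_2(X')\mu^{\ast}H^{n-2}$. Because $\mu$ is crepant and the singularities are canonical, I expect the discrepancy between the two to carry the right sign (or to vanish), so that the inequality survives the substitution; verifying this carefully is the delicate part of the argument.
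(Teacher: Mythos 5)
Your numerics are exactly the paper's: the same Bogomolov bound $c_2\cdot H^{n-2}\geq \frac{(n-1)(n-3)^2}{2n}H^n$ fed into Lemma \ref{formula}, the same simplification of the bracket to $7-\frac{9}{n}>0$ for $n\geq 4$ (your version is in fact the correct one; the paper's displayed $H^n\left(\frac{7n-9}{24}\right)$ should read $H^n\cdot\frac{7n-9}{24n}$), and your closing integrality step $\chi(X,H)>n-3\Rightarrow h^0(X,H)\geq n-2$ is the right way to finish, since for $n=4$ and small $H^n$ the excess over $n-3$ is less than $1$.

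However, the step you flag as delicate and leave open is a genuine gap, and your proposed route through it points in the wrong direction. You plan to establish a Bogomolov inequality for the reflexive tangent sheaf on $X$ itself and then compare $c_2(X)\cdot H^{n-2}$ with $c_2(X')\cdot\mu^{\ast}H^{n-2}$, expecting the crepant comparison to carry a favourable sign. Two problems: first, a Gorenstein canonical variety may be singular in codimension two, so the intersection number $c_2(X)\cdot H^{n-2}$ need not even be defined --- this is precisely why Lemma \ref{formula} is phrased on the terminalization to begin with; second, the crepant map $\mu$ is in general divisorial (not small), and no sign for the difference of the two $c_2$-terms under such an extraction is available --- nothing in your sketch produces one. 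The paper argues in the opposite direction and never compares Chern classes of $X$ and $X'$: since $(\mu^{\ast}H)^{n-1}\cdot E=0$ for every $\mu$-exceptional divisor $E$ (projection formula), slopes with respect to $\mu^{\ast}H$ are insensitive to exceptional components, so $H$-semistability of $T_X$ yields $\mu^{\ast}H$-semistability of $T_{X'}$; and because $X'$ is terminal, hence smooth in codimension two, $T_{X'}$ is a Q-sheaf in codimension two and satisfies the hypotheses of the Bogomolov inequality of \cite[Lemma 6.5]{KMM}, which applied directly on $X'$ gives $c_2(X')\mu^{\ast}H^{n-2}\geq\frac{n-1}{2n}c_1(X')^2\mu^{\ast}H^{n-2}=\frac{(n-1)(n-3)^2}{2n}H^n$, using $K_{X'}=\mu^{\ast}K_X$ and again the projection formula. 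With that substitution your computation goes through verbatim; the fix is to transfer the semistability up to $X'$ and apply Bogomolov there, not to apply it on $X$ and try to move the conclusion across $\mu$.
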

\begin{proof}
If $T_X$ is $H$-semistable then $T_{X'}$ is $\mu^{\ast}H$-semistable.
Since $X'$ is terminal, its tangent bundle $T_{X'}$ is a Q-sheaf in codimension 2
and satisfies the conditions of the Bogomolov inequality  ~\cite[Lemma 6.5]{KMM}.
In our situation the inequality becomes
$$c_2(X')\mu^{\ast}H^{n-2}\geq \frac{n-1}{2n} c_1(X')^2 \mu^{\ast}H^{n-2}=\frac{(n-1)(n-3)^2}{2n}H^n.$$
We conclude by using it in the formula of Lemma \ref{formula}:
$$
\begin{array}{rl}
h^0(X,H)&=\frac{H^n}{24}(-n^2+7n-8)+\frac{c_2(X')\mu^{\ast}H^{n-2}}{12}+n-3\\
&\\
&\geq \frac{H^n}{24} \left[-n^2+7n-8+\frac{(n-1)(n-3)^2}{n}\right]+n-3\\
&\\
&=H^n\left(\frac{7n-9}{24}\right)+n-3\geq n-2.
\end{array}
$$
\end{proof}
\begin{proof} [Proof of Theorem \ref{maintheorem2}(3)]
If $X$ is a smooth Fano variety of Picard number one and index bigger than $(n+1)/2$
then by the result of ~\cite[Theorem 2.11]{Hwang}, ~\cite{HwangMok} the tangent bundle $T_X$ is $H$-semistable.
The numerical condition on the index is verified, if the index is $n-3$, for all $n\geq 8$.
Then the third statement of the theorem follows from
Proposition \ref{nonv_semist} since by Wisniewski's result ~\cite{W2}
a Fano variety with index $n-3$ and dimension at least 8
either has Picard number one
or is isomorphic to $\mathbb{P}^4\times\mathbb{P}^4$.
In the latter case the fundamental divisor is $H=p_1^{\ast}h+p_2^{\ast}h$
where $h$ is an hyperplane section on $\mathbb{P}^4$ and 
$p_1,p_2$ are the projections on the factors.
Then $h^0(\mathbb{P}^4\times\mathbb{P}^4,p_1^{\ast}h+p_2^{\ast}h)=h^0(\mathbb{P}^4,h)^2=25>n-2$.
\end{proof}

\section{Structure of a general element in $|H|$.}
Now we look for some result about the regularity of a general section $Y\in|H|$.
By inversion of adjunction \cite[Thm.7.5]{K} Theorem \ref{maintheorem1}
is equivalent to proving that the pair $(X, Y)$ is plt. This is the object of the following

\begin{prop}
Let $X$ be a Fano variety of dimension $n$ and index $n-3$ with at most Gorenstein canonical singularities.
Let $H$ be a fundamental divisor and suppose that
$h^0(X,H)\neq 0$. Let $Y\in|H|$ be a general element.
Then $(X,Y)$ is plt.
\end{prop}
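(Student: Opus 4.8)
The plan is to argue by contradiction, exploiting the two preparatory lemmas together with Kawamata's analysis of minimal centers. Since $X$ has at most Gorenstein canonical singularities, the pair $(X,0)$ is klt, so we may run the log canonical center machinery with boundary $\Delta=0$. Assume $(X,Y)$ is not plt for a general $Y\in|H|$ and let $c\le 1$ be the log canonical threshold of $Y$ with respect to $(X,0)$; then $(X,cY)$ is log canonical but not plt, hence it admits a minimal center of log canonicity $W$. Because $H$ is ample and Cartier and $Y$ is a general member of $|H|$, Lemma \ref{lognonpltinbaselocus} applies with $\Delta=0$ and shows that every center of log canonicity of $(X,cY)$, and in particular $W$, is contained in the base locus $\mathrm{Bs}|H|$. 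The goal is then to derive a contradiction from the existence of such a $W\subseteq\mathrm{Bs}|H|$.

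I would split according to the codimension of the minimal centers, using Lemma \ref{c1/2}. If every minimal center has codimension one, then $X$ Gorenstein and $Y$ an effective Cartier divisor force $c\le 1/2$; by the proof of that lemma this means precisely that $Y$ is non-reduced, so the fixed part of $|H|$ contains a prime divisor $D$ of multiplicity at least two and $H-2D$ is effective. Note that one cannot dispose of this by Bertini, since the hypothesis is only $h^0(X,H)\neq 0$ and $|H|$ may be zero-dimensional; instead I would exclude such a fixed component using the numerical properties of the fundamental divisor, namely the relation $-K_X\sim(n-3)H$ together with the Riemann--Roch expression of Lemma \ref{formula}, which constrain the effective decompositions of $H$.

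The remaining case, when some minimal center $W$ has codimension at least two, is the one I expect to be the main obstacle, since the containment $W\subseteq\mathrm{Bs}|H|$ is not by itself contradictory: the point is to manufacture a section of $H$ that does not vanish identically on $W$. Here I would invoke Kawamata's subadjunction theorem \cite{Kaw} to equip the (after a tie-breaking perturbation, exceptional) center $W$ with an effective boundary making it a klt pair and endowing $H|_W$ with enough positivity inherited from the index-$(n-3)$ Fano structure. Combining this with a Nadel-type vanishing for the multiplier ideal of $(X,cY)$ cut out along $W$ should make the restriction $H^0(X,H)\to H^0(W,H|_W)$ surjective onto a nonzero space, producing a section of $H$ nonzero on $W$ and contradicting $W\subseteq\mathrm{Bs}|H|$. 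The delicate step is controlling the positivity of $H$ on a higher-dimensional minimal center and arranging the vanishing so that the non-vanishing hypothesis $h^0(X,H)\neq 0$ can be propagated to $W$; this is exactly where the numerical input governing $H^n$ and $c_2\cdot H^{n-2}$ must be matched against $\dim W$, and where I anticipate the real work, after which Theorem \ref{maintheorem1} follows by inversion of adjunction.
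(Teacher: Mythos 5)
Your skeleton coincides with the paper's up to the decisive point: contradiction, log canonical threshold $c$, minimal center $W$ inside $\mathrm{Bs}|H|$ via Lemma \ref{lognonpltinbaselocus}, tie-breaking to make $W$ exceptional, subadjunction, and Nadel vanishing to conclude $h^0(W,\mathcal{O}_W(H))=0$. But in the codimension $\geq 2$ branch you stop exactly where the proof happens: you never identify the mechanism producing $h^0(W,\mathcal{O}_W(H))\neq 0$. The missing idea is Kawamata's \emph{effective nonvanishing}: subadjunction (which, incidentally, is \cite{Kawsub}, not \cite{Kaw}) gives $K_W+B_W\sim_{\mathbb{Q}}-(n-3-c-\eta)H|_W$ with $(W,B_W)$ klt and $\eta$ arbitrarily small, i.e.\ a log Fano of index at least $n-3-c-\eta$; then \cite[Prop 4.1]{Kaw} yields $h^0(W,\mathcal{O}_W(H))\neq 0$ whenever $\dim W\leq 2$, and \cite[Theorem 5.1]{Kaw} yields it whenever $\dim W\geq 3$ and $n-3-c-\eta>\dim W-3$. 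Your guess that the ``real work'' consists in matching $H^n$ and $c_2\cdot H^{n-2}$ against $\dim W$ points at the wrong tools: the paper's proof of this proposition uses no Chern-number computation whatsoever --- Lemma \ref{formula} belongs to the nonvanishing statement $h^0(X,H)\geq n-2$, not to the plt-ness of $(X,Y)$, which here \emph{assumes} $h^0(X,H)\neq 0$.

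Your codimension-one branch, as designed, would fail. You correctly read off from the proof of Lemma \ref{c1/2} that codimension-one minimal centers force $Y$ non-reduced and $c\leq 1/2$, but the plan to exclude a multiple fixed component through ``numerical properties of the fundamental divisor'' via Lemma \ref{formula} is not an argument: $\chi(X,H)$ imposes no constraint on effective decompositions of $H$, and with only $h^0(X,H)\neq 0$ nothing a priori prevents the unique member of $|H|$ from being non-reduced. The paper never rules out non-reducedness; instead it keeps the two cases coupled, which your up-front case split destroys. Running subadjunction uniformly, the only way \cite[Theorem 5.1]{Kaw} can fail to apply is $\dim W\geq n-c-\eta$, which forces $\dim W=n-1$ for \emph{every} minimal center; only then is Lemma \ref{c1/2} invoked, giving $c\leq 1/2$, and feeding this back into the same inequality yields $\dim W\geq n-\tfrac12$, absurd. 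Equivalently, inside your codimension-one branch the same toolkit closes the case: with $\dim W=n-1$ and $c\leq 1/2$ one has $n-3-c-\eta>n-4=\dim W-3$, so Kawamata's theorem applies and contradicts $h^0(W,\mathcal{O}_W(H))=0$. The dimension-versus-threshold interplay, not a numerical exclusion of fixed components, is what kills the codimension-one case.
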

\begin{proof}
We argue by contradiction and
suppose that $(X,Y)$ is not plt. 

Let $c$ be the log canonical threshold of $(X,Y)$.
By Lemma \ref{lognonpltinbaselocus} the pair $(X,cY)$ is plt in the complement of the base locus of $|H|$.
Since $(X,cY)$ is properly lc there exist a minimal center $W$.

By the perturbation technique ~\cite[Thm 8.7.1]{Kol07} we can find some rational numbers $c_1,c_2\ll 1$
and an effective $\mathbb{Q}$-divisor $A\sim_{\mathbb{Q}}c_1 H$
such that $W$ is exceptional for the pair $(X,(1-c_2) c Y+A)$.\\
By Kawamata's subadjunction formula ~\cite{Kawsub}
for every $\varepsilon >0$ there exists an effective $\mathbb{Q}$-divisor
$B_W$ on $W$ such that $(W, B_W)$ is a klt pair
and 
$$
\begin{array}{rl}
K_W + B_W&\sim_{\mathbb{Q}} (K_X+(1-c_2) cY+A+\varepsilon H)|_W\\
&\sim_{\mathbb{Q}} (-(n-3)+(1-c_2) c+c_1 +\varepsilon)H|_W.
\end{array}
$$
Set $\eta=-c_2 c+c_1 +\varepsilon$, then
$\eta$ is arbitrary small and
\begin{eqnarray}\label{formula1}
K_W + B_W\sim_{\mathbb{Q}}-(n-3-c-\eta)H|_W.
\end{eqnarray}
Let $Z$ be the union of all log canonical centers of the pair $(X,(1-c_2) c Y+A)$.
Let $\mathcal{I}_Z$ be the ideal sheaf of $Z$. We consider the exact sequence
$$0\rightarrow\mathcal{I}_Z (H)\rightarrow\mathcal{O}_X (H)\rightarrow\mathcal{O}_Z (H)\rightarrow 0.$$
By the Nadel vanishing theorem, 
$$H^1(X,\mathcal{I}_Z (H))=0.$$
Thus we obtain the short exact sequence
$$0\rightarrow H^0(X,\mathcal{I}_Z (H))\rightarrow H^0(X,\mathcal{O}_X (H))\rightarrow H^0(Z,\mathcal{O}_Z (H))\rightarrow 0.$$
By Lemma \ref{lognonpltinbaselocus} we know that $Z$ is contained in the base locus of $|H|$, so
$$H^0(X,\mathcal{I}_Z (H))\cong H^0(X,\mathcal{O}_X (H))$$
hence $h^0(Z,\mathcal{O}_Z (H))=0$.\\
Since $W$ is a connected component of $Z$, we have
$$h^0(W,\mathcal{O}_W (H))=0$$
If $W$ has dimension at most two, by ~\cite[Prop 4.1]{Kaw}
applied to $D=H|_W$
we obtain $h^0(W,\mathcal{O}_W (H))\neq0$, a contradiction.\\
If $\dim W$ is at least three,
then $(W,B_W)$ is log Fano of index $i(W)\geq n-3-c-\eta$.
Suppose that $$n-3-c-\eta>\dim W-3,$$
this implies $h^0(W,\mathcal{O}_W (H))\neq 0$ by 
~\cite[Theorem 5.1]{Kaw}, a contradiction.
Thus we are left with the case
 $$\dim W\geq n-c-\eta.$$
Since $c\leq 1$ and $\eta$ is arbitrary small, this implies $\dim W = n-1$.
This holds for all centers $W$,
then $c<1/2$ by Lemma \ref{c1/2}, 
thus we have $\dim W \geq n-1/2$, a contradiction.
\end{proof}

\addcontentsline{toc}{chapter}{Bibliography}


\begin{thebibliography}{9}
\bibitem{Ambro} F. Ambro, 
Ladders on Fano varieties, \emph{Journal of Mathematical Sciences}, {\bfseries 94},
pp. 1126-1135, no. 1 (1999)

\bibitem{BCHM} C. Birkar, P. Cascini, C. Hacon, and J. McKernan, Existence of minimal
models for varieties of log general type, \emph{J. Amer. Math. Soc.}, {\bfseries 23}, pp. 405-468 (2010)

\bibitem{Fult} W. Fulton, \emph{Intersection Theory},
Springer (1984)

\bibitem{H} R. Hartshorne, \emph{Algebraic Geometry},
Graduate Texts in Mathematics {\bfseries 49}, Springer, Heidelberg (1977)

\bibitem{HV} A. H\"oring, C. Voisin, Anticanonical Divisors and Curve Classes
on Fano Manifolds,
\emph{Pure and Applied Mathematics Quarterly},  {\bfseries 7}, number 4,
pp. 1371-1393, (2011)


\bibitem{Hwang} J. Hwang, Geometry of Minimal rational Curves on Fano Manifolds,
\emph{Lecture given at the
School on Vanishing Theorems and Effective Results in Algebraic 
Geometry
Trieste, 25 April - 12 May 2000}, {\bfseries 6} ISBN 92-95003-09-8, pp. 344-390,
ITCP (2001)

\bibitem{HwangMok} J. Hwang, N. Mok, Varieties of minimal rational tangents on uniruled projective manifolds,
\emph{Several Complex Variables}, MSRI publications {\bfseries 37},
Cambridge University Press, pp. 351-389, (2000)

\bibitem{Kaw} Y. Kawamata, On effective non vanishing and base-point-freeness,
\emph{Asian J. Math.}, {\bfseries 4}, pp. 173-181 (2000)

\bibitem{Kawsub} Y. Kawamata, 
Subadjunction of log canonical divisors. II, 
\emph{Amer. J. Math.}, {\bfseries 120}, pp. 893-899, no. 5 (1998)

\bibitem{KMM} S. Keel, K. Matsuki, and J. McKernan, Log abundance theorem for
threefolds, \emph{Duke Math. J.} {\bfseries 75} , no. 1, pp. 99-119 (1994)

\bibitem{Kol07} J. Koll\'ar, 
Kodaira's canonical bundle formula and subadjunction, 
\emph{Oxford Lecture Series in Mathematics and its Applications}, {\bfseries 35}, 
chapter 8, pp. 121-146. (2007). 

\bibitem{K} J. Koll\'ar,  \emph{Singularities of Pairs},
Algebraic Geometry, Santa Cruz 1995,
Proc. Symp. Pure Math. {\bfseries 62},
Amer. Math. Soc., Providence, RI, pp. 221-287 (1997)

\bibitem{KM} J. Koll\'ar and S. Mori, \emph{Birational Geometry of Algebraic Varieties},
Cambridge Tracts in Math, {\bfseries 134},
Cambridge University Press, Cambridge (1998)

\bibitem{Mella} M. Mella, 
Existence of good divisors on Mukai manifolds,
\emph{J. Algebraic Geom.} {\bfseries 8} , no. 2, pp 197-206 (1999)

\bibitem{Sho}
V.~V. {\v{S}}okurov,
Smoothness of a general anticanonical divisor on a {F}ano variety.
\emph{Izv. Akad. Nauk SSSR Ser. Mat.}, 43(2):430--441, (1979)


\bibitem{W2} J. Wi\'{s}niewski, On Fano manifolds of large index, 
\emph{Manuscripta mathematica} {\bfseries 70},
pp 145-152 (1990)

\bibitem{W3} J. Wi\'{s}niewski, Fano manifolds and quadric bundles,
\emph{Mathematische Zeitschrift} {\bfseries 214},
pp 261-271 (1993)

\end{thebibliography}
\end{document}